\newtheorem{theorem}{Theorem}[section]
\theoremstyle{definition}
\newtheorem{example}{Example}[section]
\newtheorem{assumption}{Assumption}[section]
\theoremstyle{remark}
\newcommand{\brref}[1]{(\ref{#1})}
\newcommand{\milano}{Dipartimento di Matematica ``F. Enriques"
 \\ Universit\`a degli Studi di Milano \\ Via Saldini 50 \\ 20133
Milano, Italy}
\newcommand{\Pin}[1]{{\mathbb P}^{#1}}
\newcommand{\rk}[1]{{\rm rk}\,(#1)}
\newcommand{\nbXi}{\mathbf{X}_i}
\newcommand{\Phicols}[2]{[\Phi^k_{h_1,h_2,h_3}]_#1^#2}
\newcommand{\T}{\mathcal{T}}
\newcommand{\tensor}{\otimes}
\renewcommand*\env@matrix[1][*\c@MaxMatrixCols c]{%
  \hskip -\arraycolsep
  \let\@ifnextchar\new@ifnextchar
  \array{#1}}
\title{The Rank of Trifocal Grassmann Tensors}
\author[M.Bertolini]{Marina Bertolini}
\email{marina.bertolini@unimi.it}
\address{\milano}
\author[G.M.Besana]{Gian Mario Besana}
\email{gbesana@depaul.edu}
\address{College of Computing and Digital Media \\ DePaul University \\ 243 South Wabash \\ Chicago IL, 60604 USA}
\author[G.Bini]{Gilberto Bini}
\email{gilberto.bini@unimi.it}
\address{\milano}
\author[C.Turrini]{ Cristina Turrini}
\email{cristina.turrini@unimi.it}
\address{\milano}
\renewcommand*\env@matrix[1][*\c@MaxMatrixCols c]{%
  \hskip -\arraycolsep
  \let\@ifnextchar\new@ifnextchar
  \array{#1}}
\begin{document}
\date{\today}

\begin{abstract}
Grassmann tensors arise from classical problems of scene
reconstruction in computer vision. Trifocal Grassmann tensors,
related to three projections from a projective space of dimension
$k$ onto view-spaces of varying dimensions are studied in this
work. A canonical form for the combined projection matrices is
obtained. When the centers of projections satisfy a natural
generality assumption, such canonical form gives a closed formula
for the rank of trifocal Grassmann tensors. The same
approach is also applied to the case of two projections,
confirming a previous result obtained with different methods in
\cite{tubbAMPA}. The rank of sequences of tensors converging to
tensors associated with degenerate configurations of projection
centers is also considered, giving concrete examples of a wide
spectrum of phenomena that can happen.

\noindent \textbf{Keywords.} Tensor Rank, Border Rank, Projective
reconstruction in Computer Vision, Multi-view Geometry.
\end{abstract}

\maketitle

\section{Introduction}

Tensors, as multidimensional arrays representing multilinear
applications among vector spaces, have traditionally played a
pivotal role in many areas, from physics to computer science, to
electrical engineering. As algebraic geometry is increasingly
witnessing intense activity in more applied directions, tensors
have come to the fore of the discipline as useful tools on one
hand, and as beautifully intricate objects of study on the other,
with rich geometric interplay with other classical ideas. In
particular, the calculation of any of the various established
notions of rank of a tensor is an interesting and difficult
problem. While many authors have recently studied these issues, a
standard reference is \cite{LA} and a useful survey is
\cite{be-car-cat-gi-on}.

The authors have been interested for a while in a class of tensors
that arise from classical problems of scene reconstruction in
computer vision. In the classical case of reconstruction of a
three-dimensional static scene from two, three, or four
two-dimensional images, these tensors are known as the fundamental
matrix, the trifocal tensor, and the quadrifocal tensor,
respectively, and have been studied extensively, see for example
\cite{Hart-Zi2}, \cite{oe1}, \cite{oe2}, \cite{al-to2},
\cite{hey1}. In a more general setting, these tensors are called
{\it Grassmann} tensors and were introduced by Hartley and
Schaffalitzky, \cite{Hart-Schaf}, as a way to encode information
on corresponding subspaces in multiview geometry in higher
dimensions. Three of the authors have studied critical loci
for projective reconstruction from multiple views, \cite{tubbLAIA},
\cite{be-tur1}, and in this setting Grassmann tensors play a fundamental role, \cite{ber-tu-no1}, \cite{be-ber-no-tu}.

The authors' long-term goal is to study properties such as rank,
decomposition, degenerations, and identifiability of Grassmann
tensors in higher dimensions, and, when feasible, the varieties
parameterizing such tensors.

The first step was taken in \cite{tubbAMPA}, where three of the
authors studied the case of two views in higher dimensions,
introducing the concept of generalized fundamental matrices as
2-tensors. That first work contained an explicit geometric
interpretation of the rational map associated to the generalized
fundamental matrix, the computation of the rank of the generalized
fundamental matrix with an explicit, closed formula, and the
investigation of some properties  of the variety of such objects.

The next natural step in the authors' program is the study of
trifocal Grassmann tensors, i.e. Grassmann tensors arising from
three projections from higher dimensional projective spaces onto
view-spaces of varying dimensions. A natural genericity
assumption, see Assumption \ref{g.a.}, allows for suitable changes
of coordinates in the view spaces and in the ambient space that
give rise to a canonical form for the combined projection
matrices. Utilizing such canonical form, the rank of
trifocal Grassmann tensors is computed with a closed formula, see Theorem
\ref{therank}. When Assumption \ref{g.a.} is no longer satisfied,
the situation becomes quite intricate. A general canonical form
for the combined projection matrices can still be obtained, see
Section \ref{uprank}. We conclude with a series of examples in
which the rank is computed utilizing the canonical form. These
examples illustrate the wide spectrum of possible phenomena that
can happen with the specialization of the three centers of
projection. In particular, we provide examples of sequences of
Grassmann tensors of given rank $r$, converging to limit tensors
whose rank can be either strictly larger than $r,$ Example
\ref{examplep3p2p2p2deg}, and Example \ref{examplep4p2p2p2}-a, or
strictly smaller than $r,$ Example \ref{examplep4p2p2p2}-b. The
first two of these cases are geometric examples of tensors with
border rank strictly smaller than their rank.

\section{Background Material}

\subsection{Preliminaries on tensors} \label{introranks}
Notation and definitions of tensors
and their ranks (rank and border-rank) used in this work are relatively
standard in the literature. They are all contained in
\cite{LA} and briefly summarized below.

Given vector spaces $V_i, i = 1,\dots t,$  the {\it rank} of a tensor $T \in V_1 \tensor V_2\tensor ... \tensor V_t,$ denoted by $R(T),$ is the minimum number of decomposable tensors needed to write $T$ as a sum.
Recall that $R(T)$ is invariant under changes of bases in the vector spaces $V_i $ (see for example \cite{LA}, Section 2.4 ).

Furthermore, a tensor $T$ has {\it border rank} $r$ if it is a limit of tensors of
rank $r$ but is not a limit of tensors of rank $s$ for any $s <
r.$ Let $\underline{R}(\T)$ denote the border rank of $T$. Note
that $\underline{R}(\T)\le R(T).$

As in Section \ref{gentriten} we will focus on tri-linear tensors,
we recall here that given a tensor
$T \in V_1 \tensor V_2 \tensor V_3$, where $\dim{V_i} = a_i,$ its rank $R(T)$ can also be realized as the minimal number $p$ of
rank $1$ $a_1 \times a_2$-matrices $S_1, \dots, S_p$ such that each
slice $T_{i,j,\hat{k}}$, for a fixed $\hat{k},$ is a linear combination
of such $S_1, \dots, S_p$ (see for example \cite{er-jo},
Theorem $2.1.2.$).

\subsection{Multiview Geometry}
\label{prelimCV}

For the convenience of the reader, in this Section we recall
standard facts and notation for cameras, centers of projection,
and multiple views in the context of projective reconstruction in computer vision. A
{\it scene} is a set of $N$ points $\{\nbXi \}\in \Pin{k}, i=1,
\dots, N.$ A {\it camera} $P$ is a projection from $\Pin{k}$ onto
$\Pin{h},$ ($h< k),$ from a linear center $C_P.$ The target space
$\Pin{h},$ is called {\it view}. Once homogeneous coordinates have
been chosen in $\Pin{k}$ and $\Pin{h},$ $P$ can be identified with
a $(h+1) \times (k+1)-$ matrix of maximal rank, defined up to a
constant, for which we use the same symbol $P.$ With this
notation, $C_P$ is the right annihilator of $P,$ hence a
$(k-h-1)$-space.  Accordingly, if $\mathbf{X}$ is a point in
$\Pin{k},$ we denote its image in the projection equivalently as
$P(\mathbf{X})$ or $P\cdot \mathbf{X}.$

The rows of $P$ represent linear subspaces of
$\Pin{k}=\mathbb{P}(\mathbb{C}^{k+1})$ defining the center of projection  $C_P$
and can be identified with points of the dual space
${\check{\Pin{k}}}=\mathbb{P}(\check{\mathbb{C}}^{k+1})$, within which
they span a linear space of dimension $h,$
$\Lambda_P=\mathbb{P}(L_P)$, where $L_P$ is a complex vector space
of dimension $h+1.$

The right action of $GL(k+1)$ on $P$ corresponds to a change of
coordinates in $\Pin{k},$ while the left action of $GL(h+1)$ can
be thought of either as a change of coordinates in $L_P$ or in the view.

In the context of multiple view geometry, one considers a set of
multiple images of the same scene, obtained from a set of cameras
$P_j:\Pin{k}\setminus C_{P_j} \to \Pin{h_j}$.

Two different images $P_l(\mathbf{X})$ and $P_m(\mathbf{X})$ of
the same point $\mathbf{X}$ are \textit{corresponding points} and,
more generally, $r$ linear subspaces $\mathcal{S}_j \subset
\Pin{h_j},$ $j=1,\dots, r$ are said to be \textit{corresponding}
if there exists at least one point $\mathbf{X} \in \Pin{k}$ such
that $P_j(\mathbf{X})\in \mathcal{S}_j$ for $j=1,\dots, r.$

\subsection{Grassmann Tensors}
\label{grasstens}
In the context of multiview geometry, Hartley and Schaffalitzky,
\cite{Hart-Schaf}, introduced {\it Grassmann tensors}, which
encode the relations between sets of corresponding subspaces in
the various views. We recall here the basic elements of their
construction.

Consider a set of projections $P_j:\Pin{k}\setminus{C_{P_j}} \to
\Pin{h_j},$ $j = 1,\dots,r,$ $h_j \geq 2$ and a {\it profile},
i.e. a partition $(\alpha_1, \alpha_2, \dots, \alpha_r)$ of $k+1,$
where $1 \leq \alpha_j \leq h_j$ for all $j,$ and $\sum\alpha_j =
k+1.$

Let $\{\mathcal{S}_j\},$ $j=1,\dots,r,$ where $\mathcal{S}_j
\subset \Pin{h_j},$ be a set of general $s_j$-spaces, with
$s_j=h_j-\alpha_j,$ and let $S_j$ be the maximal rank
$(h_j+1)\times (s_j+1)-$matrix whose columns are a basis for
$\mathcal{S}_j$. By definition, if all the $\mathcal{S}_j$ are
corresponding subspaces there exist a point $\mathbf{X} \in
\Pin{k}$ such that $P_j(\mathbf{X})\in \mathcal{S}_j$ for
$j=1,\dots, r.$ In other words there exist $r$ vectors
$\mathbf{v_j} \in \mathbb{C}^{s_j+1}$ $j = 1,\dots,r,$ such that:
\begin{equation}
\label{grasssystem}
\begin{bmatrix}
   P_1 & S_1 & 0 & \dots & 0  \\
   P_2 & 0 & S_2 & \dots & 0 \\
  \vdots & \vdots & \vdots & \vdots & \vdots \\
   P_r & 0 & \dots & 0 & S_r \\
\end{bmatrix}%
\cdot
\begin{bmatrix}
   \mathbf{X}\\
   \mathbf{v_1} \\
  \mathbf{v_2} \\
  \vdots \\
  \mathbf{v_r} \\
\end{bmatrix}
=
\begin{bmatrix}
  0 \\
  0 \\
  \vdots \\
  0 \\
\end{bmatrix}.
\end{equation}

The existence of a non trivial solution
$\{\mathbf{X},\mathbf{v_1},\dots,\mathbf{v_r}\}$ for  system
(\ref{grasssystem}) implies that the system matrix has zero
determinant. This determinant can be thought of as an $r$-linear
form, i.e. a tensor, in the Pl\"{u}cker coordinates of the spaces
$\mathcal{S}_j.$ This tensor is called the {\it Grassmann tensor} $\T,$ and
$\T \in V_1 \tensor V_2\tensor ... \tensor V_r$ where $V_i$ is the
$\binom{h_i+1}{h_i-\alpha_i + 1}$ vector space such that $G(s_i, h_i) \subset \mathbb{P}(V_i).$
More explicitly, the entries of the Grassmann tensor are some of
the Pl\"{u}cker coordinates of the matrix:
\begin{equation}
\left[%
\begin{array}{c|c|c|c}
\label{matricetrasposta_r}
  {P_1}^T & {P_2}^T & \dots & {P_r}^T \\
\end{array}%
\right],
\end{equation}
indeed they are, up to sign, the maximal minors of the matrix
(\ref{matricetrasposta_r}) obtained selecting $\alpha_i$ columns
from ${P_i}^T$, for $i=1, \dots, r.$

It is useful to observe the
effect on a Grassmann tensor and its rank of the actions of $GL(k + 1)$ on the
ambient space and of $GL(h_i + 1)$ on the views. A change of
coordinates in the ambient space, realized by a right action of
$GL(k+1)$ on \brref{matricetrasposta_r} does not alter the tensor,
as all entries are multiplied by the same non-zero constant. On
the other hand, any change of coordinates in a view through left
action of $GL(h_i + 1)$ on the corresponding $P_i^{T}$ does alter
the entries of the tensor, but preserves its rank. Indeed, the
change of coordinates in one of the views induces a linear
invertible transformation on $V_i,$ leaving the rank unchanged, as noted in Section \ref{introranks}.

In the following Sections we deal with the cases of two and three
views, in which the Grassmann tensor turns out to be respectively
a matrix and a three dimensional tensor.

\section{Generalized fundamental matrix} \label{genfundmatrix}

\noindent We consider here the case of two views which gives rise
to the notion of {\it generalized fundamental matrix}, introduced
and studied in \cite{tubbAMPA}. Let us consider two maximal rank
projections $A=[a_{i,j}]$ and $B=[b_{i,j}]$ from $\Pin{k}$ to
$\Pin{h_1}$ and to $\Pin{h_2},$ respectively, where $h_1+h_2 \geq
k+1,$ and where $A$ and $B$ are such that their projection centers
$C_A$ and $C_B$ are in general position so that they do not
intersect. This condition is equivalent to the fact that the
linear span $<L_A,L_B>$ is the whole $\check{\mathbb{C}}^{k+1}.$
The images of the two centers of projection $E^A_B = A(C_B)$ and $
E^B_A=B(C_A)$ are subspaces of dimension $k-h_i-1,$ $i=1,2, $
respectively, of the view spaces, usually called {\it epipoles}.

Following \cite{Hart-Schaf}, we choose a profile
$(\alpha_1,\alpha_2),$ with $\alpha_1+\alpha_2=k+1,$ in order to
obtain the constraints necessary to determine the corresponding
tensor, which, in this case, is a matrix called generalized
fundamental matrix. In the following we make explicit how to place
the minors of (\ref{matricetrasposta_r}) as entries of the
generalized fundamental matrix.

In this case, (\ref{matricetrasposta_r}) becomes
\begin{equation}
\left[%
\begin{array}{c|c}
\label{matricetrasposta_2}
  {A}^T & {B}^T \\
\end{array}%
\right]
\end{equation} and the generalized fundamental matrix
$\mathfrak{F}$ is the $\binom{h_1+1}{h_1-\alpha_1+1}
\times\binom{h_2+1}{h_2-\alpha_2+1}$ matrix, whose entries are
some of the Pl\"{u}cker coordinates of the $k-$space
$\Lambda_{AB} \subset \Pin{h_1+h_2+1},$ spanned by the columns of
the above matrix.

Let $I=(i_1, \dots,i_{s_1+1}),$ $J=(j_1, \dots, j_{s_2+1}),$
$\hat{J}=(h_1+1+j_1, \dots, h_1+1+j_{s_2+1})$ with $1 \leq i_1 <
\dots < i_{s_1+1} \leq h_1+1$ and $1 \leq j_1 < \dots < j_{s_2+1}
\leq h_2+1.$ Denote by $I', \hat{J}'$ the (ordered) sets of
complementary indices  $I'= \{r \in \{1, \dots, h_1+1 \}$ such
that $r \notin I\}$ and $\hat{J}'= \{ s \in \{h_1+2, \dots,
h_1+h_2+2 \} \text{ such that } s \notin \hat{J}\}$. Moreover
denote by $A_I$ and $B_J$ the matrices obtained from $A^T$ and
$B^T$ by deleting columns $i_1, \dots, i_{s_1+1}$ and $j_1, \dots,
j_{s_2+1},$ respectively.

\noindent Then the entries of $\mathfrak{F}$ are:
$F_{I,J}=\epsilon(I,J) \det
\begin{bmatrix}
  A_I &  B_J \\
\end{bmatrix}$ where $\epsilon(I,J)$ is $+1$ or $-1$ according to the parity of
the permutation $(I,\hat{J}, I',\hat{J}'),$ with lexicographical
order of the multi-indices $\{I\}$ for the rows and $\{\hat{J}\}$
for the columns.

In other words, one has $F_{I,J}=q_{I,\hat{J}}(\Lambda_{AB})$,
where $q_{K}(\Lambda)$ denotes the dual-Pl\"{u}cker coordinates
(see, for example, \cite{HP}, Vol.I, book II, pg. 292) of the
space $\Lambda,$ with respect to the multi-index $K$.

In \cite{tubbAMPA} the authors proved the following result:
\begin{theorem}
\label{genfundmatteo} The generalized fundamental matrix
$\mathfrak{F}$ for two projections of maximal rank and whose
centers do not intersect each other, with profile
$(\alpha_1,\alpha_2),$ has rank:
$$\rk{\mathfrak{F}} =\binom{(h_1-\alpha_1+1)+(h_2-\alpha_2+1)}{h_1-\alpha_1+1}.$$
\end{theorem}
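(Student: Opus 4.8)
The plan is to exploit the canonical form coming from the genericity assumption $\langle L_A, L_B\rangle = \check{\mathbb{C}}^{k+1}$, and then to identify $\mathfrak{F}$ with a very concrete combinatorial matrix whose rank can be read off directly. Concretely, since $C_A$ and $C_B$ do not meet, we may choose coordinates in $\Pin{h_1}$, $\Pin{h_2}$ and $\Pin{k}$ so that the combined matrix $[A^T \mid B^T]$ takes a normal form in which the columns of $A^T$ and $B^T$ together form, up to the obvious block structure, a standard coordinate frame: roughly, $A$ and $B$ can be taken so that the $k$-space $\Lambda_{AB}\subset \Pin{h_1+h_2+1}$ is spanned by $k+1$ of the coordinate points, arranged compatibly with the splitting of the $h_1+h_2+2$ coordinates into the first $h_1+1$ and the last $h_2+1$. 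Since the rank of $\mathfrak{F}$ is unaffected by the $GL(h_i+1)$ actions on the views (as recalled after \brref{matricetrasposta_r}) and by the $GL(k+1)$ action on the ambient space, it suffices to compute the rank of the matrix $[F_{I,J}]$ for this one canonical choice.

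Next I would unwind what the dual-Plücker description $F_{I,J}=\epsilon(I,J)\det[A_I\ B_J]$ gives in the canonical frame. With $\Lambda_{AB}$ spanned by coordinate points, the dual-Plücker coordinate $q_{I,\hat J}(\Lambda_{AB})$ is $\pm 1$ exactly when the complementary index set $(I',\hat J')$ — of size $(h_1+h_2+2)-(s_1+1)-(s_2+1) = k+1$ — picks out precisely the coordinate points spanning $\Lambda_{AB}$, and $0$ otherwise. Because the spanning set is a fixed $(k+1)$-subset of $\{1,\dots,h_1+h_2+2\}$ respecting the two blocks, the nonzero entries $F_{I,J}$ are indexed by those pairs $(I,J)$ for which $I'$ (inside the first block) and $\hat J'$ (inside the second block) together reconstruct that fixed spanning set. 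Equivalently, one block of indices is forced once the other is chosen, so the nonzero entries of $\mathfrak{F}$ form (after reordering rows and columns) a generalized permutation submatrix — a matrix with exactly one nonzero $\pm1$ entry in each of a certain collection of rows and columns, and zeros elsewhere. The rank is then the number of such rows, i.e. the number of admissible multi-indices $I$, which by a direct count of $(s_1+1)$-subsets versus the block constraints equals $\binom{(h_1-\alpha_1+1)+(h_2-\alpha_2+1)}{h_1-\alpha_1+1}$, using $s_i = h_i-\alpha_i$ and $\alpha_1+\alpha_2 = k+1$.

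The main obstacle I anticipate is making the canonical form and the resulting index bookkeeping precise: one must verify that the genericity hypothesis really does allow a normalization of $[A^T\mid B^T]$ in which $\Lambda_{AB}$ is spanned by a block-respecting set of coordinate points (the non-intersecting-centers condition is exactly what guarantees that the $h_1+h_2+2$ rows of $[A^T\mid B^T]$, viewed in $\check{\mathbb{C}}^{k+1}$, span and can be put in such position), and then carefully track the complementary-index / dual-Plücker correspondence so that ``exactly one nonzero entry per row'' is genuinely true rather than merely ``at most one''. Handling the sign function $\epsilon(I,J)$ is routine and irrelevant to the rank, so it can be ignored. Once the generalized-permutation structure is established, the rank computation is the combinatorial count above and the comparison with Theorem \ref{genfundmatteo} is immediate; this also serves as a warm-up and consistency check for the three-view argument that follows.
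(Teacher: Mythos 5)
Your overall strategy is the paper's: use the two group actions to put $[A^T\mid B^T]$ into a canonical form, observe that the nonzero entries of $\mathfrak{F}$ occupy distinct rows and distinct columns, and count them. But your key intermediate claim --- that the normalization can be chosen so that $\Lambda_{AB}$ is spanned by a \emph{fixed}, block-respecting set of $k+1$ coordinate points of $\Pin{h_1+h_2+1}$ --- is false, and taken literally it destroys the argument: if $\Lambda_{AB}$ were a coordinate subspace, exactly one dual-Pl\"ucker coordinate $q_{I,\hat J}(\Lambda_{AB})$ would be nonzero and $\mathfrak{F}$ would have rank $1$. (Your next sentence, ``one block of indices is forced once the other is chosen,'' quietly contradicts the fixed-spanning-set picture; with a single fixed spanning set both blocks are forced outright and there is only one admissible pair $(I,J)$.) The obstruction is precisely the quantity the paper's proof is built around: $i=\dim(L_A\cap L_B)=h_1+h_2-k+1=(h_1-\alpha_1+1)+(h_2-\alpha_2+1)\geq 2$ is strictly positive under the profile condition $\alpha_1+\alpha_2=k+1$, so $L_A$ and $L_B$ genuinely overlap and no change of coordinates can separate their columns into disjoint sets of coordinate vectors.

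What the normalization actually yields (the paper's $\Phi^k_{h_1,h_2}$) is: the columns of $A^T$ become $e_1,\dots,e_i,e_{i+1},\dots,e_{h_1+1}$ and the columns of $B^T$ become $e_1,\dots,e_i,e_{h_1+2},\dots,e_{k+1}$, so the first $i$ coordinate vectors --- a basis of $L_A\cap L_B$ --- appear \emph{twice}, once in each block; equivalently, $\Lambda_{AB}$ contains the $i$ non-coordinate directions $f_j+f_{h_1+1+j}$ of $\mathbb{C}^{h_1+h_2+2}$. A maximal minor is then nonzero if and only if one takes all $h_1+1-i$ columns $e_{i+1},\dots,e_{h_1+1}$, all $h_2+1-i$ columns $e_{h_1+2},\dots,e_{k+1}$, and splits $\{e_1,\dots,e_i\}$ between the two blocks into complementary subsets of sizes $\alpha_1-(h_1+1-i)$ and $\alpha_2-(h_2+1-i)$. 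It is this free choice of splitting --- there are $\binom{i}{h_2-\alpha_2+1}$ of them --- that produces the many nonzero entries, each in its own row and column. With the canonical form corrected in this way, the rest of your argument (the generalized-permutation structure and the binomial count) goes through and coincides with the paper's proof.
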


The proof given in \cite{tubbAMPA} is obtained associating to the
matrix $\mathfrak{F}$ a rational map $\Phi: G(s_1,h_1)
\dashrightarrow G(k-\alpha_1, h_2)$ whose image is the Schubert
variety $\Omega(E^B_A)$ of the $k-\alpha_1$ spaces containing
$E^B_A,$ and showing that
$\rk{\mathfrak{F}}=dim(<\Omega(E^B_A)>)+1,$ where
$<\Omega(E^B_A)>$ is the projective space spanned by
$\Omega(E^B_A).$

In view of desired generalizations, here we give a straightforward proof of Theorem \ref{genfundmatteo}
based on a suitable choice of coordinates in the
projective spaces involved.

Let $L_A$ and
$L_B$ be the two vector spaces of dimension $h_1+1$ and
$h_2+1$,respectively, spanned by the columns of $A^T$ and $B^T$
and let $\Lambda_A=\mathbb{P}(L_A)$ and
$\Lambda_B=\mathbb{P}(L_B)$. We denote with $i$ the dimension of
$I_{A,B}:= L_A \cap L_B$ which, from Grassmann's formula, turns
out to be $i=h_1+h_2-k+1$. Notice that our assumptions on the
profile ($k+1=\alpha_1 + \alpha_2$) imply that $i > 0$.

One can then choose bases
\begin{alignat*}{2}
\{v_1, \dots, v_i, w_{i+1}, \dots, w_{h_1+1}\}& \ {\rm for }\  L_A,\\
\{v_1, \dots, v_i, w'_{i+1}, \dots, w'_{h_2+1}\}&\  {\rm for }\  L_B,
\end{alignat*}
 such that $\{v_1, \dots, v_i\}$ is a basis for $I_{A,B}$.

Through the left action of $GL(h_1+1)$ and $GL(h_2+1)$ on $A$ and
$B$ respectively, one can then assume that the columns of $A^T$ and $B^T$ are
, respectively,$$[v_1, \dots, v_i, w_{i+1}, \dots, w_{h_1+1}]$$  and
$$[v_1, \dots, v_i, w'_{i+1}, \dots, w'_{h_2+1}].$$

With this assumption, $\{v_1, \dots, v_i, w_{i+1}, \dots,
w_{h_1+1}, w'_{i+1}, \dots, w'_{h_2+1}\}$ is a basis of
$\check{\mathbb{C}}^{k+1},$ and, with the right action of $GL(k+1),$ we
can reduce it to the canonical one $\{e_1, \dots,
e_{k+1}\}.$ With this choice, the matrix (\ref{matricetrasposta_2})
becomes the block matrix
$$\Phi^k_{h_1,h_2}:=\left[%
\begin{array}{cc|cc}
  I_i & \mathbf{0} & I_i & \mathbf{0} \\
  \mathbf{0} & I_{h_1+1-i} & \mathbf{0} & \mathbf{0} \\
  \mathbf{0} & \mathbf{0} & \mathbf{0} & I_{h_2+1-i} \\
\end{array}%
\right]$$ where $I_s$ denotes the $s \times s$ identity matrix and
$\mathbf{0}$ are zero matrices.

The columns of $\Phi^k_{h_1,h_2}$ are denoted by:
$$\tiny \left[%
\begin{array}{cccccc|cccccccccc}
  \underline{a}_1& \dots & \underline{a}_i &\underline{b}_{i+1}& \dots & \underline{b}_{h_1+1}
  &\underline{c}_{h_1+2}& \dots &
   \underline{c}_{h_1+1+i} & \underline{d}_{h_1+2+i}& \dots & \underline{d}_{h_1+h_2+2}
\end{array}%
\right]$$

With this choice of basis, the entries of the fundamental matrix
are the maximal minors of $\Phi^k_{h_1,h_2}$ obtained with
$\alpha_1$ columns chosen among the $\underline{a}_j$ and
$\underline{b}_j$ and $\alpha_2$ columns chosen among the
$\underline{c}_j$ and $\underline{d}_j$. The only non vanishing
entries of the fundamental matrix are hence obtained taking all
the columns $\underline{b}_j$ and $\underline{d}_j$ and choosing
$\alpha_1-(h_1+1-i)$ columns among the $\underline{a}_j$ and the
complementary $\alpha_2-(h_2+1-i)$ among the $\underline{c}_j$. It
follows that the non vanishing entries are as many as the possible
choices of $\alpha_1-(h_1+1-i)$ columns among the first $i$ columns
of $\Phi^k_{h_1,h_2}$. In other words the non zero entries of the
fundamental matrix are:
$$\binom{i}{h_2-\alpha_2+1}=\binom{(h_1-\alpha_1+1)+(h_2-\alpha_2+1)}{h_1-\alpha_1+1}.$$

This number is precisely the rank of the fundamental matrix
since non vanishing entries appear in different rows and columns
of the fundamental matrix.

To clarify the above procedure we consider the following example.

\begin{example}
\label{examplep4p3} Consider  two projections from $\Pin{4}$ to
$\Pin{3}$ with profile $(3,2)$. In this case the matrix
(\ref{matricetrasposta_2}) has dimension $5 \times 8.$ The
subspace $\Lambda_{AB}$ is in  $G(4,7) \subset
\Pin{{\binom{8}{5}}-1},$ and the fundamental matrix $\mathfrak{F}$
turns out to be:
$$\mathfrak{F} =
\begin{bmatrix}
  q_{1,5,6} & q_{1,5,7}& q_{1,5,8} & q_{1,6,7} & q_{1,6,8} & q_{1,7,8} \\
  q_{2,5,6} & q_{2,5,7}& q_{2,5,8} & q_{2,6,7} & q_{2,6,8} & q_{2,7,8} \\
  q_{3,5,6} & q_{3,5,7}& q_{3,5,8} & q_{3,6,7} & q_{3,6,8} & q_{3,7,8} \\
  q_{4,5,6} & q_{4,5,7}& q_{4,5,8} & q_{4,6,7} & q_{4,6,8} & q_{4,7,8} \\
\end{bmatrix}$$

\bigskip

\noindent and the matrix $\Phi^4_{3,3}$ is:
$$\Phi^4_{3,3}= \left[%
\begin{array}{cccc|cccc}
  1 & 0 & 0 & 0 & 1 & 0 & 0 & 0 \\
  0 & 1 & 0 & 0 & 0 & 1 & 0 & 0 \\
  0 & 0 & 1 & 0 & 0 & 0 & 1 & 0 \\
  0 & 0 & 0 & 1 & 0 & 0 & 0 & 0 \\
  0 & 0 & 0 & 0 & 0 & 0 & 0 & 1 \\
\end{array}%
\right]$$

\noindent so that the generalized fundamental matrix, in canonical
form, is the following, from which it is evident that $\rk{\mathfrak{F}} = 3$:

$$\mathfrak{F_C} =
\begin{bmatrix}
  0 & 0& 0 & \pm1 & 0 & 0 \\
  0 & \pm1& 0 & 0 & 0 & 0 \\
  \pm1 & 0& 0 & 0 & 0 &0 \\
  0 & 0& 0 &0 & 0 & 0 \\
\end{bmatrix}.$$
\end{example}

\bigskip
\section{Trifocal Grassmann tensors} \label{gentriten}

Let us now consider three projections $P_1, P_2,$ and $P_3,$ from
$\Pin{k}$ to $\Pin{h_1}$, $\Pin{h_2}$ and to $\Pin{h_3},$
respectively, where $h_1+h_2+h_3 \geq k+1,$ and where $P_1, P_2,$
and $P_3,$ are maximal rank matrices.

Grassmann's formula shows that for generic choices of $P_1, P_2,$
and $P_3,$ their projection centers  $C_1, C_2,$ and $C_3$ are
mutually disjoint under the assumptions: $k-h_i+h_j-1 \leq 0,$ for
$1 \leq i,j \leq 3, i \neq j.$

As in the case of the generalized fundamental matrix, let
$(\alpha_1,\alpha_2,\alpha_3),$ be a profile with $\alpha_1+\alpha_2+\alpha_3=k+1,$ in
order to obtain the necessary constraints to determine the
corresponding tensor. The tensor thus obtained is called the {\it
trifocal Grassman tensor} and it is a generalization of the
classical trifocal tensor for three views in $\Pin{3}.$
Its entries can be explicitly computed from (\ref{grasssystem}), as shown below.

In this case, (\ref{matricetrasposta_r}) becomes
\begin{equation}
\left[%
\begin{array}{c|c|c}
\label{matricetrasposta_3}
  {P_1}^T & {P_2}^T & {P_3}^T\\
\end{array}%
\right]
\end{equation} and the entries of the trifocal tensor
$\mathcal{T}$ are, up to sign,  some of the maximal minors of the matrix
(\ref{matricetrasposta_3}) obtained by choosing $\alpha_1$ columns
in ${P_1}^T$, $\alpha_2$ in ${P_2}^T$ and $\alpha_3$ in ${P_3}^T$.

More explicitly, let $I=(i_1, \dots, i_{s_1+1}),$ $J=(j_1, \dots,
j_{s_2+1}),$ $K=(k_1, \dots, k_{s_3+1}),$ $\hat{J}=(h_1+1+j_1,
\dots, h_1+1+j_{s_2+1})$ and $\hat{K}=(h_1+h_2+2+k_1, \dots,
h_1+h_2+2+k_{s_3+1})$ with $1 \leq i_1 < \dots < i_{s_1+1} \leq
h_1+1$, $1 \leq j_1 < \dots < j_{s_2+1} \leq h_2+1$ and $1 \leq
k_1 < \dots < k_{s_3+1} \leq h_3+1$.

Denote by $I', \hat{J}',\hat{K}'$ the (ordered) sets of
complementary indices $I'= \{r \in \{1, \dots, h_1+1 \}$ such that
$r \notin I\}$ and $\hat{J}'= \{ s \in \{h_1+2, \dots, h_1+h_2+2 \}
\text{ such that } s \notin \hat{J}\}$ and $\hat{K}'= \{ t \in
\{h_1+h_2+3, \dots, h_1+h_2+h_3+3 \} \text{ such that } t \notin
\hat{K}\}$. Moreover denote by ${P_1}_I$, ${P_2}_J$ and ${P_3}_K$
respectively, the matrices obtained from ${P_1}^T$, ${P_2}^T$ and
${P_3}^T$ deleting columns $i_1, \dots, i_{s_1+1}$, $j_1, \dots,
j_{s_2+1}$ and $k_1, \dots, k_{s_3+1},$ respectively.
Let $\epsilon(i_1,\dots, i_n)$ be $+1$ or $-1$ according to the parity of the permutation $(i_1,\dots, i_n).$
 The entries of $\mathcal{T}$ are given by:
\begin{equation}
\label{fijk}
\T_{I,J,K}=\epsilon(I,\hat{J},\hat{K},
I',\hat{J}',\hat{K}') \det
\begin{bmatrix}
  {P_1}_I \\
  {P_2}_J \\
  {P_3}_K \\
\end{bmatrix} \end{equation}.

Denote by $V_i$ the vector space such that $G(s_i,h_i) \subseteq
\Pin{\binom{h_i+1}{s_i+1}-1}= \mathbb{P}(V_i)$. The {\it
trifocal Grassmann tensor} for three projections $P_1,P_2,P_3$
from $\Pin{k}$ to $\Pin{h_1}$, $\Pin{h_2}$ and $\Pin{h_3},$ with
profile $(\alpha_1, \alpha_2, \alpha_3),$ is, up to a
multiplicative non zero constant,  the
$\binom{h_1+1}{h_1-\alpha_1+1}
\times\binom{h_2+1}{h_2-\alpha_2+1}\times\binom{h_3+1}{h_3-\alpha_3+1}$
tensor $\mathcal{T}\in V_1 \otimes V_2 \otimes V_3,$ whose
entries are $\T_{I,J,K}$ with lexicographical order of the families
$\{I\}, \{J\},$ and $\{K\}$ of multi-indices.

\section{The Rank of trifocal Grassmann tensors}

In the classical case of projections from $\Pin{3}$ to $\Pin{2}$,
the rank of the trifocal tensor is known to be $4,$ (e.g. see
\cite{oe1}, \cite{hey1}), while the rank of the quadrifocal tensor
turns out to be $9,$ \cite{hey1}. Nothing further is known in general
about the ranks of Grassmann tensors. In this Section
first we provide a canonical form for the matrix
(\ref{matricetrasposta_3}), in analogy to what was done for the
two views case. Then, using this canonical form, we compute
$R(\mathcal{T})$ in the general case, i.e. when the center of
projections are in general position (see Assumption \ref{g.a.}). The non general cases are discussed in
Section \ref{uprank}.

\subsection{Canonical form} \label{canonical}

Let $L_1$,
$L_2$ and $L_3$ be the vector spaces of dimension $h_1+1$, $h_2+1$
and $h_3+1$ respectively, spanned by the columns of ${P_1}^T$,
${P_2}^T$ and ${P_3}^T$ and let $\Lambda_1=\mathbb{P}(L_1)$,
$\Lambda_2=\mathbb{P}(L_2)$ and $\Lambda_3=\mathbb{P}(L_3)$.

We consider, for each triplet of distinct integers $r,s,t=1,2,3,$
the following integers:

\begin{align}
&i_{r,s}=h_r+h_s+1-k;\label{numrel1}\\
&i=h_1+h_2+h_3+1-2k;\label{numrel2} \\
&j_{r,s}=i_{r,s}-i=k-h_t.\label{numrel3}
\end{align}
\bigskip
\noindent Our generality assumption is the following:
\begin{assumption}


 \label{g.a.} For any choice of $r,s,t$ with $\{r,s,t\}=\{1,2,3\}$, the intersection $\Lambda_{rs}=L_r \cap L_s$ with
$L_t$ span $\mathbb{C}^{k+1},$ or, equivalently, the span of each
pair of centers do not intersect the third one.
\end{assumption}

\noindent This assumption implies, in particular, that for any
choice of a pair $r,s$, the span of $L_r$ and $L_s$ is the whole
$\mathbb{C}^{k+1},$ or, in other words, that the two centers $C_r$
and $C_s$ do not intersect.

Under Assumption \ref{g.a.}, applying Grassmann formula one sees that the three
numbers above have the following meaning: $i_{r,s}= dim (L_r \cap
L_s) \geq 0$, for any choice of $r,s$ , $i= dim (L_1 \cap L_2 \cap
L_3) \geq 0$ and $j_{r,s}$ is the affine dimension of the center
$C_t$ i.e. $k-{h_t}=j_{rs}$ for $r,s,t=1,2,3.$

Hence we can choose bases as follows:

$$L_1 \cap L_2 \cap L_3 = <v_1, \dots, v_i>$$

$$L_1 \cap L_2 = <v_1, \dots, v_i, w_{1}, \dots, w_{j_{1,2}}>$$

$$L_1 \cap L_3 = <v_1, \dots, v_i,  u_{1}, \dots, u_{j_{1,3}}>$$

$$L_2 \cap L_3 = <v_1, \dots, v_i, s_{1}, \dots, s_{j_{2,3}}>$$

\

\noindent so that:

$$L_1 = <v_1, \dots, v_i, w_{1}, \dots, w_{j_{1,2}}, u_{1}, \dots, u_{j_{1,3}}>,$$

$$L_2 = <v_1, \dots, v_i, w_{1}, \dots, w_{j_{1,2}}, s_{1}, \dots, s_{j_{2,3}}>,$$

$$L_3 = <v_1, \dots, v_i, u_{1}, \dots, u_{j_{1,3}}, s_{1}, \dots, s_{j_{2,3}}>.$$

\

\noindent Through the left action of $GL(h_i+1)$ on $P_i,$
$i=1,2,3,$ one can assume that the columns of ${P_1}^T, {P_2}^T,$ and ${P_3}^T$ are, respectively:
$$[v_1, \dots, v_i, w_{1}, \dots, w_{j_{1,2}}, u_{1}, \dots, u_{j_{1,3}}],$$
$$[v_1, \dots, v_i, w_{1}, \dots, w_{j_{1,2}}, s_{1}, \dots, s_{j_{2,3}}],$$
$$[v_1, \dots, v_i, u_{1}, \dots, u_{j_{1,3}}, s_{1}, \dots, s_{j_{2,3}}].$$

\noindent With this assumption,
\begin{equation}
\label{basebuona}
\{v_1, \dots, v_i, w_{1},
\dots, w_{j_{1,2}}, u_{1}, \dots, u_{j_{1,3}},s_{1}, \dots,
s_{j_{2,3}}\}
\end{equation} is a basis of $\check{\mathbb{C}}^{k+1}.$

\

\noindent With the right action of $GL(k+1)$ we can reduce \brref{basebuona} to the canonical basis.

\noindent With this choice, the matrix (\ref{matricetrasposta_3}) becomes the block matrix:
\begin{equation}
\label{canmat}
\Phi^k_{h_1,h_2,h_3}:=\left[%
\begin{array}{ccc|ccc|ccc}
  I_i & \mathbf{0} & \mathbf{0} & I_i & \mathbf{0} & \mathbf{0} & I_i & \mathbf{0} & \mathbf{0} \\
  \mathbf{0} & I_{j_{1,2}} & \mathbf{0} & \mathbf{0} & I_{j_{1,2}} & \mathbf{0} & \mathbf{0} & \mathbf{0} & \mathbf{0}\\
  \mathbf{0} & \mathbf{0} & I_{j_{1,3}} & \mathbf{0} & \mathbf{0} & \mathbf{0} & \mathbf{0} & I_{j_{1,3}} & \mathbf{0}\\
  \mathbf{0} & \mathbf{0} & \mathbf{0} & \mathbf{0} & \mathbf{0}& I_{j_{2,3}} & \mathbf{0} & \mathbf{0} & I_{j_{2,3}}\\
\end{array}%
\right].
\end{equation}

\subsection{The rank} \label{rankgp}
The canonical form $\Phi^k_{h_1,h_2,h_3}$ of matrix \brref{matricetrasposta_3} allows one to successfully
compute the rank of trifocal Grassmann tensors.
\begin{theorem}
\label{therank} Let  $P_l:\Pin{k} \to \Pin{h_l},$ $l=1,2,3,$ be maximal rank projections whose centers satisfy
Assumption \ref{g.a.}.
The trifocal Grassmann tensor $\mathcal{T}$
for projections $\{P_l\},$ with
profile $(\alpha_1,\alpha_2,\alpha_3),$ has rank:
\begin{equation}\tiny
\sum_{a_2=0}^{j_{12}}\sum_{a_3=0}^{j_{13}}\sum_{b_3=0}^{j_{23}}\binom{j_{12}}{a_2}\binom{j_{13}}{a_3}\binom{j_{23}}{b_3}
\binom{i}{\alpha_1-a_2-a_3}\binom{i-\alpha_1+a_2+a_3}{\alpha_2-j_{12}+a_2-b_3},
\end{equation}
where $i= h_1+h_2+ h_3 + 1 -2k$ and $j_{rs}= k - h_t$ for $\{r,s,t\} = \{ 1,2,3\}.$
\end{theorem}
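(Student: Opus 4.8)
The plan is to work entirely with the canonical form \brref{canmat} of \brref{matricetrasposta_3} and to read off the rank from the combinatorics of its non‑vanishing maximal minors. First I would describe the entries of $\T$ in the basis induced by \brref{canmat}. Every column of $\Phi^k_{h_1,h_2,h_3}$ is a standard basis vector of $\mathbb{C}^{k+1}$: a vector $e_m$ whose index lies in the first row block occurs three times (once in the column block coming from each $P_l^T$), while an $e_m$ indexed in the second, third or fourth row block occurs exactly twice, namely in the two column blocks coming from the two projections that ``share'' that row block. By \brref{fijk} an entry of $\T$ is, up to sign, the maximal minor of $\Phi^k_{h_1,h_2,h_3}$ obtained by selecting $\alpha_l$ columns from $P_l^T$; since all columns are coordinate vectors, such a minor equals $\pm1$ exactly when the selected columns form a permutation matrix and is $0$ otherwise. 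Hence the non‑zero entries of $\T$ are in bijection with the maps $f\colon\{1,\dots,k+1\}\to\{1,2,3\}$ assigning each coordinate $m$ to the projection supplying $e_m$, subject to the block constraints read off \brref{canmat} (a coordinate of the $l$‑th row block with $l\ge 2$ may go only to one of the two projections meeting that block, a coordinate of the first row block to any of the three) and to $|f^{-1}(l)|=\alpha_l$, $l=1,2,3$.

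Next I would count and organize these maps, and record the two structural facts that make the rank accessible. Grouping the admissible $f$ by the numbers $a_2,a_3,b_3$ of indices of the second, third and fourth row blocks sent respectively to $P_1,P_1,P_2$, and then counting the remaining freedom in the first row block, shows at once that the number $N$ of non‑zero entries of $\T$ equals the triple sum in the statement (the classical case $h_1=h_2=h_3=2$, $k=3$ gives the check $N=4$). The two facts are: (i) each slice of $\T$, in any of the three directions, is a generalized permutation matrix — fixing for instance the multi‑index in the third factor pins down $f$ on the third and fourth row blocks, and an elementary argument shows that the admissible completions lie in pairwise distinct rows and pairwise distinct columns; (ii) a pair of multi‑indices in two of the factors determines the whole map $f$, hence also the multi‑index in the third factor, so that no two non‑zero positions of $\T$ agree in two coordinates and the non‑zero slices in any fixed direction have pairwise disjoint supports (in particular they are linearly independent).

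The upper bound $R(\T)\le N$ is then immediate: summing the $N$ monomial terms $\pm\, e_I\tensor e_J\tensor e_K$ over the non‑zero positions exhibits $\T$ as a sum of $N$ decomposable tensors. The heart of the proof is the matching lower bound $R(\T)\ge N$, which I expect to be the main obstacle. Here I would use the slice description of tensor rank recalled in Section \ref{introranks}: $R(\T)$ is the least $p$ for which there are rank‑one matrices $S_1,\dots,S_p$ in $V_1\tensor V_2$ whose span contains every slice $\T_{I,J,\hat K}$. The non‑zero slices are generalized permutation matrices with pairwise disjoint supports, so their span is rigid; but the shape of the support alone is not enough — already a ``standalone'' $2\times2\times2$ tensor supported on four positions pairwise differing in two coordinates can have rank $2$ rather than $4$ (as for a group‑algebra multiplication tensor) — so the argument must use both the explicit signs coming from \brref{fijk} and the fact that the four row blocks are globally interlocked and do not split $\T$ as a direct sum. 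Concretely I would try to run the substitution method, repeatedly removing one rank‑one contribution while checking that the reduced tensor is again of this combinatorial type with one fewer non‑zero entry, or, equivalently, to show directly that the rank‑one locus inside the span of the slices is too small to span that span with fewer than $N$ elements.
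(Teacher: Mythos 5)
Your counting argument is the same as the paper's. The paper likewise reads the entries of $\T$ off the canonical form \brref{canmat}, notes that a maximal minor of a matrix whose columns are coordinate vectors is $\pm 1$ precisely when the chosen columns exhaust the canonical basis, and organizes the admissible choices by the parameters $a_2,a_3,b_3$ to obtain the triple sum as the number of non-vanishing entries of $\T$, hence as an upper bound for $R(\T)$; your reformulation in terms of maps $f\colon\{1,\dots,k+1\}\to\{1,2,3\}$ is just a cleaner bookkeeping of the same column selection, and your check $N=4$ in the classical case is correct. Your structural facts (i) and (ii) are also exactly what the paper records for the lower bound: once the columns indexed by $I$ and $J$ are fixed there is at most one admissible $K$, so distinct non-zero positions differ in at least two coordinates and the slices in any direction are partial permutation matrices with pairwise disjoint supports.

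The divergence is in the lower bound $R(\T)\ge N$, and there your proposal is incomplete: you state explicitly that you do not know how to close this step and only gesture at the substitution method or an analysis of the rank-one locus in the span of the slices. As a self-contained proof this is a genuine gap, since without the lower bound the formula is only an upper bound for $R(\T)$. You should be aware, however, that the paper disposes of this step in one sentence, invoking the slice characterization of Section \ref{introranks} together with precisely the disjoint-support property you isolate, and nothing more. Your reservation about that inference is mathematically well founded: the $\mathbb{Z}/2$ group-algebra tensor has four non-zero entries, any two of which differ in two coordinates, and its two slices are disjointly supported permutation matrices, yet its rank is $2$; so the combinatorics of the support alone cannot force the rank to equal the number of non-zero entries, and a rank-one matrix can serve several disjointly supported slices at once. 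In short, the step you leave open is the step the paper also leaves essentially unargued; actually completing the theorem requires carrying out something like the substitution argument you sketch, using the specific interlocking of the four row blocks of \brref{canmat}, and neither your proposal nor the paper's proof supplies that in detail.
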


\begin{proof}

Let $\Phi^k_{h_1,h_2,h_3}$ be the canonical form of matrix \brref{matricetrasposta_3} associated to
the given projections $P_l:\Pin{k} \to \Pin{h_l},$ $l=1,2,3,$ and let $[\Phi^k_{h_1,h_2,h_3}]_r^s$
denote the submatrix of $\Phi^k_{h_1,h_2,h_3}$ consisting of
consecutive columns from column $r,$ included, to column $s,$ included.
To generate each entry of the tensor $\mathcal{T}$ one must choose:
\begin{itemize}
\item[-] $a_1$ columns from $\Phicols{1}{i},$ \\

\item[-] $a_2$ columns from $\Phicols{{i+1}}{{i+j_{12}}},$\\

\item[-] $a_3$ columns from $\Phicols{{i+j_{12}+1}}{{i+j_{12}+j_{13}}},$
\end{itemize} with $a_1+a_2+a_3=\alpha_1$. \\
Similarly one has to choose:
\begin{itemize}
\item[-] $b_1$ columns from $\Phicols{{i+j_{12}+j_{13}+1}}{{2i+j_{12}+j_{13}}},$\\

\item[-] $b_2$ columns from $\Phicols{{2i+j_{12}+j_{13}+1}}{{2i+2j_{12}+j_{13}}},$\\

\item[-] $b_3$ columns from $\Phicols{{2i+2j_{12}+j_{13}+1}}{{2i+2j_{12}+j_{13}+j_{23}}},$
\end{itemize}
with $b_1+b_2+b_3=\alpha_2$. \\
Finally one has to choose:
\begin{itemize}
\item[-] $c_1$ columns from $\Phicols{{2i+2j_{12}+j_{13}+j_{23}+1}}{{3i+2j_{12}+j_{13}+j_{23}}},$\\

\item[-] $c_2$ columns from $\Phicols{{3i+2j_{12}+j_{13}+j_{23}+1}}{{3i+2j_{12}+2j_{13}+j_{23}}},$\\

\item[-] $c_3$ columns from $\Phicols{{3i+2j_{12}+2j_{13}+j_{23}+1}}{{3i+2j_{12}+2j_{13}+2j_{23}}},$
\end{itemize}
with $c_1+c_2+c_3=\alpha_c$.

Moreover to get non vanishing entries of $\T$, the following equalities
must be satisfied:
\begin{itemize}
\item $a_1+b_1+c_1=i$

\item $a_2+b_2=j_{12}$

\item $a_3+c_2=j_{13}$

\item $b_3+c_3=j_{23}$.
\end{itemize}

From the above conditions, the number of non vanishing entries of
the tensor is given by:

 \begin{equation}\tiny \label{ubipos2}
\sum_{a_2=0}^{j_{12}}\sum_{a_3=0}^{j_{13}}\sum_{b_3=0}^{j_{23}}\binom{j_{12}}{a_2}\binom{j_{13}}{a_3}\binom{j_{23}}{b_3}
\binom{i}{\alpha_1-a_2-a_3}\binom{i-\alpha_1+a_2+a_3}{\alpha_2-j_{12}+a_2-b_3}.
\end{equation}
Clearly (\ref{ubipos2}) gives an upper bound for $R(\mathcal{T}).$
To prove that (\ref{ubipos2}) is equal to $R(\mathcal{T}),$ we
use the slices-based characterization of the rank recalled at the end of  Section \ref{introranks}.

\noindent In our case the positions of the non zero entries of
$\mathcal{T}$ are different for different faces, i.e. if
$T_{\bar{I},\bar{J},\bar{K}} \neq 0$, the
$T_{\bar{I},\bar{J},K}=0$ for all $K \neq \bar{K}$. The reason is
that once the columns determined by the multi-indexes $I$ and $J$
are chosen there is at most one possible choice of the columns
determined by $K$ which gives a non vanishing minor.

\noindent This completes the proof.
\end{proof}

The above result is further illustrated by the two following explicit examples.

\begin{example}
\label{examplep3p2p2p2} In the case of the classical  $3 \times 3
\times 3$ trifocal tensor, i.e. of three projections from
$\Pin{3}$ to $\Pin{2}$ with profile $(2,1,1)$, we get: $i=1$ and
$i_{rs}=2$ for each $r,s$. Hence, in this case, \brref{canmat} is:
$$\Phi^3_{2,2,2}:=\left[%
\begin{array}{ccc|ccc|ccc}
  1 & 0 & 0 & 1 & 0 & 0 & 1 & 0 & 0 \\
  0 & 1 & 0 & 0 & 1 & 0 & 0 & 0 & 0\\
  0 & 0 & 1 & 0 & 0 & 0 & 0 & 1 & 0\\
  0 & 0 & 0 & 0 & 0 & 1 & 0 & 0 & 1\\
\end{array}%
\right].$$
The only non vanishing elements of the tensor are: $\T_{131},
\T_{113},\T_{221},\T_{312}$, hence $R(\T) = 4.$
\end{example}

\begin{example}
\label{examplep4p3p3p2} In the case of three projections from
$\Pin{4}$ to $\Pin{3}$, $\Pin{3}$ and $\Pin{2},$ with profile
$(2,2,1)$, we get: $i=1, i_{12}=3$, and $i_{13}=i_{23}=2$. Hence, in this
case, \brref{canmat} becomes:
$$\Phi^4_{3,3,2}:=\left[%
\begin{array}{cccc|cccc|ccc}
  1 & 0 & 0 & 0 & 1 & 0 & 0 & 0 & 1 & 0 & 0\\
  0 & 1 & 0 & 0 & 0 & 1 & 0 & 0 & 0 & 0 & 0\\
  0 & 0 & 1 & 0 & 0 & 0 & 1 & 0 & 0 & 0 & 0\\
  0 & 0 & 0 & 1 & 0 & 0 & 0 & 0 & 0 & 1 & 0\\
  0 & 0 & 0 & 0 & 0 & 0 & 0 & 1 & 0 & 0 & 1\\
\end{array}%
\right].$$

The trifocal tensor $\T$ is a $6 \times 6 \times 3$ tensor and  its non
vanishing elements are: $\T_{123}, \T_{161},\T_{213},\T_{251},\T_{341},
\T_{431},\T_{522},\T_{612}$, hence $R(\T) = 8.$

Moreover, one sees that $\T$ is a linear combination of :
\begin{alignat*}{4}
\mathbf{e^1_1}\otimes \mathbf{e^2_2}\otimes \mathbf{e^3_3},\quad &
\mathbf{e^1_1}\otimes \mathbf{e^2_6}\otimes\mathbf{e^3_1},\quad &
\mathbf{e^1_2}\otimes \mathbf{e^2_1}\otimes \mathbf{e^3_3},\quad &
\mathbf{e^1_2}\otimes \mathbf{e^2_5} \otimes \mathbf{e^3_1},\\
\mathbf{e^1_3}\otimes \mathbf{e^2_4} \otimes \mathbf{e^3_1},\quad &
\mathbf{e^1_4}\otimes \mathbf{e^2_3} \otimes \mathbf{e^3_1},\quad &
\mathbf{e^1_5}\otimes \mathbf{e^2_2} \otimes \mathbf{e^3_2},\quad &
\mathbf{e^1_6}\otimes \mathbf{e^2_1} \otimes \mathbf{e^3_2},
\end{alignat*}

where $\mathbf{e^r_s}$ is the $s$-element of the canonical base of the
vector space $V_r = \mathbb{C}^{\binom{h_r+1}{h_r-\alpha_r+1}}.$

\end{example}

\section{The non general case}
\label{uprank}

In this Section we consider cases in which Assumption \ref{g.a.}
is not satisfied, and the rank
depends on the degenerate geometric configurations of the projections. This is
evident also in the simplest case of the classical trifocal tensor
for which the rank is $4$ for general projections (example
\ref{examplep3p2p2p2}) and becomes $5$ when the three centers are on a line (example \ref{examplep3p2p2p2deg}).

If Assumption \ref{g.a.} is not satisfied one can no longer obtain
canonical form \brref{canmat} for the combined projection matrices, because the
integers defined in \brref{numrel1}, \brref{numrel2}, and \brref{numrel3} lose their geometric meaning
and, moreover, \brref{numrel3} may no longer hold.

In this situation one can obtain a different canonical form,
from which the rank of the Grassmann tensor can be
computed in concrete cases.

We introduce the following notations:
\begin{itemize}

\item $g:=\dim{(L_1 \cap L_2 \cap L_3)}$;

\item $g_{rs}:=\dim{(L_r \cap L_s)}$;

\item $l_{rs}:=g_{rs}-g$;

\item $\alpha_{rs}$ the non negative integer such that the span
$<L_r,L_s>$ has dimension $k+1-\alpha_{rs}$;

\item $\beta_{rs}$ the non negative integer such that the span
$<\Lambda_{rs},L_t>$ has dimension $k+1-\beta_{rs}$.
\end{itemize}

\noindent By Grassmann formula, these integers are linked to the
ones in \brref{numrel1}, \brref{numrel2}, and \brref{numrel3} as follows: $g=i+\alpha_{rs}+\beta_{rs}$ and
$g_{rs}=i_{rs}+\alpha_{rs}$ for any $r,s$.

Arguing as in the previous Section, where $g$
and $l_{rs}$ now play the role of $i$ and $j_{rs}$ respectively, by choosing the first $g+l_{12}+l_{13}+l_{23}$ vectors of the canonical base of
$\mathbb{C}^{k+1}$
one gets the following canonical form for the matrix
(\ref{matricetrasposta_3}), which now depends also on
$\alpha_{rs}$ and $\beta_{rs}.$

$$\tiny{\Psi^k_{h_1,h_2,h_3}:=\left[%
\begin{array}{cccc|cccc|cccc}
  I_g & \mathbf{0} & \mathbf{0}& Z^1_{1} & I_g & \mathbf{0} & \mathbf{0}& Z^1_{2} & I_g & \mathbf{0} & \mathbf{0}
  & Z^1_{3}\\
  \mathbf{0} & I_{l_{1,2}} & \mathbf{0} & Z^2_{1} & \mathbf{0} & I_{l_{1,2}} & \mathbf{0} & Z^2_{2} & \mathbf{0}
  & \mathbf{0} & \mathbf{0}& Z^2_{3}\\
  \mathbf{0} & \mathbf{0} & I_{l_{1,3}} & Z^3_{1} & \mathbf{0} & \mathbf{0} & \mathbf{0} & Z^3_{2} & \mathbf{0}
  & I_{l_{1,3}} & \mathbf{0}& Z^3_{3}\\
  \mathbf{0} & \mathbf{0} & \mathbf{0}& Z^4_{1} & \mathbf{0} & \mathbf{0}& I_{l_{2,3}} & Z^4_{2} & \mathbf{0}
  & \mathbf{0} & I_{l_{2,3}}& Z^4_{3}\\
  \mathbf{0} & \mathbf{0} & \mathbf{0}& Z^5_{1} & \mathbf{0} & \mathbf{0}& \mathbf{0} & Z^5_{2} & \mathbf{0}
  & \mathbf{0} & \mathbf{0}& Z^5_{3}\\
\end{array}%
\right].}$$

\noindent In the matrix $\Psi^k_{h_1,h_2,h_3}$, the submatrices $Z^p_t$, with
$t=1,2,3$ and $p=1,2,3,4$ have $(h_t+1-g-l_{rt}-l_{st})$ columns.
Moreover, by an iterated use of Grassmann formula, one sees that
$k+1-g-l_{12}-l_{13}-l_{23}=2(\alpha_{rs}+\beta_{rs})-(\alpha_{12}+\alpha_{13}+\alpha_{23})$
so that the matrices $Z^5_t$ have
$2(\alpha_{rs}+\beta_{rs})-(\alpha_{12}+\alpha_{13}+\alpha_{23})$
rows.

\noindent Suitable left actions of $GL(h_i+1)$ on the views give the following form for $\Psi^k_{h_1,h_2,h_3}:$

$$\tiny \Psi^k_{h_1,h_2,h_3}:=\left[%
\begin{array}{cccc|cccc|cccc}
  I_g & \mathbf{0} & \mathbf{0}& \mathbf{0} & I_g & \mathbf{0} & \mathbf{0}& \mathbf{0} & I_g & \mathbf{0} & \mathbf{0}
  & \mathbf{0}\\
  \mathbf{0} & I_{l_{1,2}} & \mathbf{0} & \mathbf{0} & \mathbf{0} & I_{l_{1,2}} & \mathbf{0} & \mathbf{0} & \mathbf{0}
  & \mathbf{0} & \mathbf{0}& Z^2_{3}\\
  \mathbf{0} & \mathbf{0} & I_{l_{1,3}} & \mathbf{0} & \mathbf{0} & \mathbf{0} & \mathbf{0} & Z^3_{2} & \mathbf{0}
  & I_{l_{1,3}} & \mathbf{0}& \mathbf{0}\\
  \mathbf{0} & \mathbf{0} & \mathbf{0}& Z^4_{1} & \mathbf{0} & \mathbf{0}& I_{l_{2,3}} & \mathbf{0} & \mathbf{0}
  & \mathbf{0} & I_{l_{2,3}}& \mathbf{0}\\
  \mathbf{0} & \mathbf{0} & \mathbf{0}& Z^5_{1} & \mathbf{0} & \mathbf{0}& \mathbf{0} & Z^5_{2} & \mathbf{0}
  & \mathbf{0} & \mathbf{0}& Z^5_{3}\\
\end{array}%
\right].$$

 The following
examples illustrate how, depending on $h_t$, the form of $\Psi^k_{h_1,h_2,h_3}$ can be further
simplified by choosing additional vectors in the canonical basis of
$\mathbb{C}^{k+1},$ as columns of the matrices $Z^p_t$.

Moreover it is clear that the rank of the Grassmann tensor $R(\T)$ depends
on the entries of the matrices $Z^p_t$, hence an
explicit formula for $R(\T)$ is not provided.
Nevertheless, as shown in
the examples below, in specific concrete cases the number of non vanishing
elements of the tensor can be computed, and thus  an upper bound for $R(\T)$ can be obtained.



\begin{example}
\label{examplep5p2p2p2} In the case of three projections from
$\Pin{5}$ to $\Pin{2}$, $\Pin{2}$ and $\Pin{2},$ with profile
$(2,2,2)$, we get: $g=g_{rs}=l_{rs}=0$,  $\alpha_{rs}=0$ and
$\beta_{rs}=3$, for each $r,s$. In this case $\Psi^5_{2,2,2}$ reduces to
$[Z^5_1|Z^5_2|Z^5_3]$, where each $Z^5_t$ is a $(6 \times 3)$
matrix. Up to now we have not yet fixed any vector of the basis,
so that, with a further choice of the reference frame, we get:

$$\Psi^5_{2,2,2}:=\left[%
\begin{array}{ccc|ccc|ccc}
  1 & 0 & 0 & 0 & 0 & 0 & z_{11} & z_{12} & z_{13} \\
  0 & 1 & 0 & 0 & 0 & 0 & z_{21} & z_{22} & z_{23} \\
  0 & 0 & 1 & 0 & 0 & 0 & z_{31} & z_{32} & z_{33} \\
  0 & 0 & 0 & 1 & 0 & 0 & z_{41} & z_{42} & z_{43} \\
  0 & 0 & 0 & 0 & 1 & 0 & z_{51} & z_{52} & z_{53} \\
  0 & 0 & 0 & 0 & 0 & 1 & z_{61} & z_{62} & z_{63} \\
\end{array}%
\right].$$

The trifocal tensor $\T$ is a $3 \times 3 \times 3$ tensor and for
generic choices of $z_{ij}$, all its elements are non vanishing and thus no significant upper bound for the rank can be given.
\end{example}

The following example is a degenerate configuration of the
classical trifocal tensor.
\begin{example}
\label{examplep3p2p2p2deg} In the case of three projections from
$\Pin{3}$ to $\Pin{2}$ with profile $(2,1,1)$ and
centers of projection on a line, one has: $g=g_{rs}=2,$ $ l_{rs}=0$,  $\alpha_{rs}=0$ and
$\beta_{rs}=1$, for each $r,s$. In this case $\Psi^3_{2,2,2}$ reduces to
$$\Psi^3_{2,2,2}:=\left[%
\begin{array}{ccc|ccc|ccc}
  1 & 0 & z_{11} & 1 & 0 & z_{12} & 1 & 0 & z_{13} \\
  0 & 1 & z_{21} & 0 & 1 & z_{22} & 0 & 1 & z_{23}\\
  0 & 0 & z_{31} & 0 & 0 & z_{32} & 0 & 0 & z_{33}\\
  0 & 0 & z_{41} & 0 & 0 & z_{42} & 0 & 0 & z_{43}\\
\end{array}%
\right].$$ Further changes of coordinates, both
in the ambient space and in the views, gives:
$$\Psi^3_{2,2,2}:=\left[%
\begin{array}{ccc|ccc|ccc}
  1 & 0 & 0 & 1 & 0 & 0 & 1 & 0 & 0 \\
  0 & 1 & 0 & 0 & 1 & 0 & 0 & 1 & 0\\
  0 & 0 & 1 & 0 & 0 & 0 & 0 & 0 & a\\
  0 & 0 & 0 & 0 & 0 & 1 & 0 & 0 & b\\
\end{array}%
\right],$$ with $a$ and $b \neq 0.$

The only non vanishing elements of the tensor are: $$T_{113},
T_{131},T_{212},T_{221},T_{311},$$ hence $R(\T) = 5,$ while the
rank of the classical general trifocal is $4.$
\end{example}

\subsection{Border ranks}
\label{brank} Examples (\ref{examplep3p2p2p2}) and
(\ref{examplep3p2p2p2deg}) seen above, provide evidence, already in the classical setting of projective reconstruction in $\Pin{3},$ of the fact that the rank of tensors is not semicontinuous.

Indeed, it is very easy to construct a
one dimensional family of triplets of point (centers of
projection) which do not lie on a line but converge to a triplet of points on a line.
In other words a family of rank $4$ tensors which
converges to a rank $5$ one.

The general situation is still more intricate: even in the first non classical cases of
$\Pin{4}$ as ambient spaces, we provide some topical examples
which display the breadth of phenomena that can occur.

\begin{example}
\label{examplep4p2p2p2} In the case of three projections from
$\Pin{4}$ to $\Pin{2}$, $\Pin{2}$ and $\Pin{2},$ with profile
$(2,2,1)$, Assumption \ref{g.a.} doesn't hold, and we get: $g=0,
g_{rs}=l_{rs}=1$,  $\alpha_{rs}=0$ and $\beta_{rs}=1$, for each
$r,s$. In this case $\Psi^4_{2,2,2}$ reduces to
$$\Psi^4_{2,2,2}:=\left[%
\begin{array}{ccc|ccc|ccc}
  1 & 0 &  0 & 1 & 0 &  0 & 0 & 0 &  z_{13} \\
  0 & 1 &  0 & 0 & 0 &  z_{22} & 1 & 0 &  0 \\
  0 & 0 &  z_{31} & 0 & 1 &  0 & 0 & 1 &  0 \\
  0 & 0 &  z_{41} & 0 & 0 &  z_{42} & 0 & 0 &  z_{43} \\
  0 & 0 &  z_{51} & 0 & 0 &  z_{52} & 0 & 0 &  z_{53} \\
\end{array}%
\right].$$ Again, a further change of coordinates in
the ambient space, gives:

$$\Psi^4_{2,2,2}:=\left[%
\begin{array}{ccc|ccc|ccc}
  1 & 0 & 0 & 1 & 0 & 0 & 0 & 0 & a \\
  0 & 1 & 0 & 0 & 0 & 0 & 1 & 0 & 0 \\
  0 & 0 & 0 & 0 & 1 & 0 & 0 & 1 & 0 \\
  0 & 0 & 1 & 0 & 0 & 0 & 0 & 0 & b \\
  0 & 0 & 0 & 0 & 0 & 1 & 0 & 0 & c \\
\end{array}%
\right],$$

\noindent with $a,b,c \neq 0$.

The trifocal tensor $\T$ is a $3\times 3 \times 3$ tensor and  its non
vanishing elements are: $\T_{111},
\T_{122},\T_{131},\T_{213},\T_{311}$, from which one easily deduce
that $R(\T) = 4,$ because the tensor is a linear combination
of:
$$(\mathbf{e^1_1}+\mathbf{e^1_3})\otimes
\mathbf{e^2_1}\otimes \mathbf{e^3_1},\quad \mathbf{e^1_1}\otimes
\mathbf{e^2_2}\otimes \mathbf{e^3_2},\quad \mathbf{e^1_1}\otimes
\mathbf{e^2_3}\otimes \mathbf{e^3_1},\quad \mathbf{e^1_2}\otimes
\mathbf{e^2_1}\otimes \mathbf{e^3_3}.$$

Starting from the above example, one can consider the following
degenerate configurations for lines $C_A, C_B, C_C,$ which are
centers of projection. Notice that each of these configurations can easily obtained as a limit of a sequence of non degenerate configurations of centers of projection.

\begin{itemize}
\item[a)] $C_A,C_B, C_C$ lie in the same hyperplane and no two of them intersect each other;
\item[b)] $C_A,C_B, C_C$ span $\Pin{4}$ but two of them have nonempty intersection;
\item[c)] $C_A,C_B, C_C$ lie in the same hyperplane and two of them have nonempty intersection.
\end{itemize}

With suitable choices of coordinates and similarly to the rank
calculations performed above, one sees that, respectively:
\begin{itemize}
\item[a)] $g=g_{rs}, l_{rs}=0$,\\
 $\alpha_{rs}=0$ and $\beta_{rs}=2$, for each $r,s$. \\
 In this case $\Psi^4_{2,2,2}$ reduces to
$$\Psi^4_{2,2,2}:=\left[%
\begin{array}{ccc|ccc|ccc}
  1 & 0 & 0 & 1 & 0 & 0 & 1 & 0 & 0 \\
  0 & 1 & 0 & 0 & 0 & 0 & 0 & z_{11} & z_{12} \\
  0 & 0 & 1 & 0 & 0 & 0 & 0 & z_{21} & z_{22} \\
  0 & 0 & 0 & 0 & 1 & 0 & 0 & z_{31} & z_{32} \\
  0 & 0 & 0 & 0 & 0 & 1 & 0 & z_{41} & z_{42} \\
\end{array}%
\right].$$

\noindent The non vanishing elements of the tensor are:
$$\T_{113}, \T_{121},\T_{122},\T_{131},\T_{132},
\T_{211},\T_{212},\T_{311},\T_{312}$$ and $R(\T)$ jumps to $5$. With the same
notation of example \ref{examplep4p3p3p2}, one sees that $\T$ is a
combination of:
\begin{alignat*}{3}
\mathbf{e^1_1}\otimes \mathbf{e^2_1}\otimes \mathbf{e^3_3},\quad
&\mathbf{e^1_1}\otimes \mathbf{e^2_2}\otimes (\mathbf{e^3_1}+ \mathbf{e^3_2}),\quad
&\mathbf{e^1_1}\otimes \mathbf{e^2_3}\otimes \mathbf{e^3_1},\\
\mathbf{e^1_2}\otimes \mathbf{e^2_1}\otimes (\mathbf{e^3_1}+ \mathbf{e^3_2}),\quad
&\mathbf{e^1_3}\otimes \mathbf{e^2_1}\otimes (\mathbf{e^3_1} +\mathbf{e^3_2}) . \quad
&
\end{alignat*}

\item[b)]$g=0, g_{12}=l_{12}=2$, $g_{13}=g_{23}=l_{13}=l_{23}=0,$\\
$\alpha_{12}=2, \beta_{12}=0,
\alpha_{13}=\alpha_{23}=0,\beta_{13}=\beta_{23}=2$. \\
In this case $\Psi^4_{2,2,2}$ reduces to

$$\Psi^4_{2,2,2}:=\left[%
\begin{array}{ccc|ccc|ccc}
  1 & 0 & 0 & 1 & 0 & 0 & 0 & z_{11} & z_{12} \\
  0 & 1 & 0 & 0 & 1 & 0 & 0 & z_{21} & z_{22} \\
  0 & 0 & 1 & 0 & 0 & 0 & 0 & z_{31} & z_{32} \\
  0 & 0 & 0 & 0 & 0 & 1 & 0 & z_{41} & z_{42} \\
  0 & 0 & 0 & 0 & 0 & 0 & 1 & 0 & 0 \\
\end{array}%
\right].$$

The non vanishing elements of the tensor are: $$\T_{123},\T_{213}$$
and  $R(\T)$ drops to $2$;

\item[c)]$g=1, g_{12}=g_{23}=1, l_{12}=l_{23}=0$, $g_{13}=2,
l_{13}=1,$ $\alpha_{12}=\alpha_{23}=0, \beta_{12}=\beta_{23}=2,
\alpha_{13}=1,\beta_{13}=1$. In this case $\Psi^4_{2,2,2}$ reduces to
 $$\Psi^4_{2,2,2}:=\left[%
\begin{array}{ccc|ccc|ccc}
  1 & 0 & 0 & 1 & 0 & 0 & 1 & 0 & 0 \\
  0 & 1 & 0 & 0 & 0 & 0 & 0 & 1 & 0 \\
  0 & 0 & 1 & 0 & 0 & 0 & 0 & 0 & a \\
  0 & 0 & 0 & 0 & 1 & 0 & 0 & 0 & b \\
  0 & 0 & 0 & 0 & 0 & 1 & 0 & 0 & c \\
\end{array}%
\right].$$ The non vanishing elements of the tensor are: $$
\T_{113},\T_{121},\T_{131},\T_{212},\T_{311},$$ $R(\T) = 4,$ and
again $\T$ is a linear combination of

$$\mathbf{e^1_1}\otimes
\mathbf{e^2_1}\otimes \mathbf{e^3_3},\quad \mathbf{e^1_1}\otimes
(\mathbf{e^2_2}+\mathbf{e^2_3}) \otimes \mathbf{e^3_1},\quad
\mathbf{e^1_2}\otimes \mathbf{e^2_1}\otimes \mathbf{e^3_2},\quad
\mathbf{e^1_3}\otimes \mathbf{e^2_1}\otimes \mathbf{e^3_1}.$$

\end{itemize}
In case $a)$ this shows that the border rank of the tensor is strictly less than its rank, i.e. $\underline{R}(\T) < R(\T).$
\end{example}

\end{document}